\def\timestamp{%
Time-stamp: <katowiceproblem.tex: Thursday 06-08-2015 at 14:21:29 (cest)>}
\def\stripname Time-stamp: <#1 #2>{#2}
\edef\filedate{\expandafter\stripname\timestamp}
\newcommand\orpr[2]{\langle#1,#2\rangle}
\DeclareMathSymbol\Z  0{AMSb}{`Z}
\DeclareMathSymbol\le    \mathrel{AMSa}{"36}
\DeclareMathSymbol\ge    \mathrel{AMSa}{"3E}
\newcommand\axiom{\mathsf}
\newcommand\GCH{\axiom{GCH}}
\newcommand\ZFC{\axiom{ZFC}}
\newcommand\cee{\mathfrak{c}}
\newcommand\dee{\mathfrak{d}}
\newcommand\cl{\operatorname{cl}}
\newcommand\Gr{\operatorname{Gr}}
\newcommand\calA{\mathcal{A}}
\newcommand\calF{\mathcal{F}}
\newcommand\calI{\mathcal{I}}
\newcommand\calT{\mathcal{T}}
\newcommand\pow{\mathcal{P}}
\newcommand\fin{\mathit{fin}}
\newcommand\functions[2]{\vphantom{#2}^{#1}#2}
\newcommand\preim{^\gets}
\theoremstyle{plain}
\newtheorem*{introthm}{Theorem}
\newtheorem{theorem}{Theorem}[section]
\newtheorem{lemma}[theorem]{Lemma}
\newtheorem{claim}{Claim}[subsection]
\theoremstyle{definition}
\theoremstyle{remark}
\newtheorem*{intro-question}{Question}
\newtheorem{remark}[theorem]{Remark}
\begin{document}
\title[The Katowice problem]
      {The Katowice problem and autohomeomorphisms of $\omega_0^*$}

\author[Chodounsk\'y]{David Chodounsk\'y\dag}
\address{Institute of Mathematics\\
         AS CR\\
         \v{Z}itn\'a 25\\
         115 67 {} Praha~I\\
         Czech Republic}
\email{david.chodounsky@matfyz.cz} 
\thanks{\dag Research of this author was supported by the GACR project
  I~1921-N25 and RVO:~67985840}

\author[Dow]{Alan Dow\ddag}
\address{
Department of Mathematics\\
UNC-Charlotte\\
9201 University City Blvd. \\
Charlotte, NC 28223-0001}
\email{adow@uncc.edu}
\urladdr{http://www.math.uncc.edu/\~{}adow}
\thanks{\ddag Research of this author was supported by NSF grant 
             No.\ NSF-DMS-0901168.}

\author[Hart]{Klaas Pieter Hart}
\address{Faculty of Electrical Engineering, Mathematics and Computer Science\\
         TU Delft\\
         Postbus 5031\\
         2600~GA {} Delft\\
         the Netherlands}
\email{k.p.hart@tudelft.nl}
\urladdr{http://fa.its.tudelft.nl/\~{}hart}

\author[De Vries]{Harm de Vries}
\address{Haarlemmerstraat 146\\
         2312~GE {} Leiden\\
         the Netherlands}
\email{vries101@gmail.com}

\keywords{Katowice problem, 
          homeomorphism, 
          non-trivial autohomeomorphism,
          \v{C}ech-Stone remainder, 
          $\omega_0^*$, $\omega_1^*$,  
          isomorphism, 
          non-trivial automorphism,
          quotient algebra, 
          $\pow(\omega_0)/\fin$, $\pow(\omega_1)/\fin$.}

\subjclass{Primary: 54D40. 
           Secondary: 03E05, 03E35, 06E05, 06E15, 54A35, 54D80}

\date{\filedate}

\begin{abstract}
We show that the existence of a homeomorphism between~$\omega_0^*$ 
and~$\omega_1^*$ entails the existence of a non-trivial autohomeomorphism 
of~$\omega_0^*$.  
This answers Problem~441 in~\cite{MR2367385}.

We also discuss the joint consistency of various consequences of $\omega_0^*$
and $\omega_1^*$ being homeomorphic.
\end{abstract}

\dedicatory{The other authors dedicate this paper to Alan,
  who doesn't look a year over 59}

\maketitle

\section*{Introduction}

The Katowice problem, posed by Marian Turza\'nski,
is about \v{C}ech-Stone remainders of discrete spaces.
Let $\kappa$ and~$\lambda$ be two infinite cardinals, endowed with the discrete
topology.
The Katowice problem asks
\begin{quote}
  If the remainders $\kappa^*$ and~$\lambda^*$ are 
  homeomorphic must the cardinals~$\kappa$ and~$\lambda$ be equal?
\end{quote}

Since the weight of $\kappa^*$ is equal to~$2^\kappa$ it is immediate that
the Generalized Continuum Hypothesis implies a yes answer.
In joint work Balcar and Frankiewicz established that the answer is actually 
positive without any additional assumptions, 
\emph{except possibly for the first two infinite cardinals}.
More precisely

\begin{introthm}[\cites{MR0461444,MR511955}]
If $\orpr\kappa\lambda\neq\orpr{\aleph_0}{\aleph_1}$
and $\kappa<\lambda$ then the remainders $\kappa^*$ and~$\lambda^*$ 
are not homeomorphic.
\end{introthm}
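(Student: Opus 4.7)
Write $\kappa<\lambda$, with $\orpr\kappa\lambda\neq\orpr{\aleph_0}{\aleph_1}$. The first and easiest step reuses the observation already made in the introduction: $w(\kappa^*)=2^\kappa$, so any homeomorphism $\kappa^*\cong\lambda^*$ forces $2^\kappa=2^\lambda$. This settles every case in which $2^\kappa<2^\lambda$. The residual and substantive task is to separate $\kappa^*$ from $\lambda^*$ under the cardinal-arithmetic accident $2^\kappa=2^\lambda$.

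For this residual case, my plan is to exhibit a topological invariant of $\kappa^*$ that tracks $\kappa$ itself rather than $2^\kappa$. The natural home for such invariants is the Boolean algebra $\pow(\kappa)/\fin$ (whose Stone dual is $\kappa^*$); visible candidates include the minimum cofinality of a tower of clopen sets, the distributivity number $\mathfrak{h}$, the minimum cardinality of a maximal almost-disjoint family of clopen copies of $\omega^*$, or a dispersion-character-type cardinal. In each case the upper bound $\kappa$ is witnessed by a canonical partition $\kappa=\bigsqcup_{\alpha<\kappa}X_\alpha$ into countable infinite pieces, giving a $\kappa$-indexed family of pairwise disjoint clopen subsets of $\kappa^*$, each homeomorphic to $\omega^*$. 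Matching the lower bound requires showing that fewer than $\kappa$~pieces cannot do the job, which amounts to a combinatorial statement about how small antichains behave in $\pow(\kappa)/\fin$; once this is in place, the putative homeomorphism transports the invariant from $\kappa^*$ to $\lambda^*$ and the strict inequality $\kappa<\lambda$ yields the desired contradiction.

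The main obstacle is that no single invariant of this kind seems to separate $\kappa^*$ from $\lambda^*$ uniformly across all pairs the theorem covers. The combinatorics on $\pow(\omega_0)/\fin$ is that of the classical cardinal characteristics of the continuum ($\mathfrak{t}$, $\mathfrak{h}$, $\mathfrak{d}$, $\dots$), whereas on $\pow(\kappa)/\fin$ for uncountable $\kappa$ one has direct access to a cofinal $\kappa$-sequence of ordinals and its order-theoretic consequences; these two regimes demand somewhat different invariants, and I would expect the proof to proceed by a case split, with the cases $\kappa=\aleph_0$ and $\kappa\geq\aleph_2$ treated by different arguments. The omitted pair $\orpr{\aleph_0}{\aleph_1}$ is the precise configuration in which the values of every such invariant can consistently coincide on $\omega_0^*$ and $\omega_1^*$ --- for instance when $\mathfrak{h}=\aleph_1$, so that the $\aleph_1$-distributive behaviour of $\pow(\omega_1)/\fin$ is mirrored by $\pow(\omega_0)/\fin$ --- and it is exactly this gap that motivates, and is addressed by, the main theorem of the present paper.
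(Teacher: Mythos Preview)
The paper does not contain a proof of this theorem. It is stated in the introduction as background, with explicit citations to Frankiewicz~\cite{MR0461444} and Balcar--Frankiewicz~\cite{MR511955}; the paper's own contribution begins only afterwards, with the construction of a non-trivial autohomeomorphism of~$\omega_0^*$ from a putative homeomorphism $\omega_0^*\cong\omega_1^*$. So there is nothing in the paper to compare your proposal against.

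As to the proposal itself: what you have written is a heuristic discussion rather than a proof. You correctly isolate the easy weight observation $2^\kappa=2^\lambda$ and correctly identify that the real work is a cardinal invariant of $\pow(\kappa)/\fin$ that recovers~$\kappa$, but you then list several candidate invariants without committing to one, and you explicitly concede that you do not know which works or how to verify the lower bound. That is the entire content of the Balcar--Frankiewicz argument, so at present your plan has a genuine gap: it names the target but supplies neither the invariant nor the combinatorics that pins it to~$\kappa$. If you want to fill this in, consult the cited papers; the argument there is not long, and it does proceed by distinguishing the case of uncountable~$\kappa$ (where one exploits the well-ordering of~$\kappa$ directly) from residual cases involving~$\omega_0$.
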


This leaves open the following problem.

\begin{intro-question}
Is it consistent that $\omega_0^*$ and~$\omega_1^*$ are homeomorphic?
\end{intro-question}

Through the years various consequences of 
``$\omega_0^*$~and $\omega_1^*$ are homeomorphic''
were collected in the hope that their conjunction would imply $0=1$ and thus 
yield a full positive answer to the Katowice problem.

In the present paper we add another consequence, namely that there
is a non-trivial autohomeomorphism of~$\omega_0^*$.
Whether this is a consequence was asked by Nyikos in~\cite{Nyikos},
right after he mentioned the relatively easy fact that $\omega_1^*$~has a
non-trivial autohomeomorphism if $\omega_0^*$ and $\omega_1^*$
are homeomorphic, see the end of Section~\ref{sec:prelim}.

After some preliminaries in Section~\ref{sec:prelim} we construct our
non-trivial autohomeomorphism of~$\omega_0^*$ in Section~\ref{een}.
In Section~\ref{sec:question} we shall discuss the consequences alluded to
above and formulate a structural question related to them;
Section~\ref{sec:consistency} contains some consistency results regarding 
that structural question.

\section{Preliminaries}
\label{sec:prelim}

We deal with \v{C}ech-Stone compactifications of discrete spaces exclusively.
Probably the most direct way of defining $\beta\kappa$, 
for a cardinal~$\kappa$ with the discrete topology, is as the space
of ultrafilters of the Boolean algebra $\pow(\kappa)$, 
as explained in~\cite{MR776630} for example.

The remainder $\beta\kappa\setminus\kappa$ is denoted $\kappa^*$ and we
extend the notation $A^*$ to denote $\cl A\cap\kappa^*$ for all subsets
of~$\kappa$. 
It is well~known that $\{A^*:A\subseteq\kappa\}$ is exactly 
the family of clopen subsets of~$\kappa^*$.

All relations between sets of the form~$A^*$ translate back to the original
sets by adding the modifier ``modulo finite sets''.
Thus, $A^*=\emptyset$ iff $A$~is finite,
$A^*\subseteq B^*$ iff $A\setminus B$~is finite and so on.

This means that we can also look at our question as an algebraic problem:

\begin{intro-question}
Is it consistent that the Boolean algebras 
$\pow(\omega_0)/\fin$ and $\pow(\omega_1)/\fin$ are isomorphic?
\end{intro-question}

Here $\fin$ denotes the ideal of finite sets.
Indeed, the algebraically inclined reader can interpret $A^*$ as the 
equivalence class of~$A$ in the quotient algebra and read the proof
in Section~\ref{een} below as establishing that there is a non-trivial
automorphism of the Boolean algebra~$\pow(\omega_0)/\fin$.

\subsection{Auto(homeo)morphisms}\label{subsec:autohom}

It is straightforward to define autohomeomorphisms of spaces
of the form~$\kappa^*$:
take a bijection $\sigma:\kappa\to\kappa$ and let it act in the 
obvious way on the set of ultrafilters to get an autohomeomorphism
of~$\beta\kappa$ that leaves $\kappa^*$ invariant.
In fact, if we want to induce a map on $\kappa^*$ it suffices to
take a bijection between cofinite subsets of~$\kappa$.

For example the simple shift $s:n\mapsto n+1$ on~$\omega_0$
determines an autohomeomorphism $s^*$ of~$\omega_0^*$.
We shall call an autohomeomorphism of $\kappa^*$ \emph{trivial} if
it is induced in the above way, otherwise we shall call it non-trivial.

\subsubsection*{A non-trivial autohomeomorphism for $\omega_1^*$}

To give the flavour of the arguments in the next section we prove that
the autohomeomorphism~$s^*$ of~$\omega_0^*$, introduced above,
has no non-trivial invariant clopen sets.
Indeed assume $A\subseteq\omega_0$ is such that $s^*[A^*]=A^*$;
translated back to~$\omega_0$ this means that the symmetric difference
of~$s[A]$ and~$A$ is finite.
Now if $k$~belongs to the symmetric difference then 
either $k\in A\setminus s[A]$ and so $k-1\notin A$
or $k\in s[A]\setminus A$ and so $k-1$ does belong to~$A$.
Conversely, if $k$~is such that $\{k,k+1\}\cap A$ consists of one point
then that point belongs to the symmetric difference of~$A$ and $s[A]$

Now let $K\in\omega_0$ be so large that the symmetric difference
is contained in~$K$.
It follows that for all $k\ge K$ the intersection $\{k,k+1\}\cap A$
consists of zero or two points.
Now consider $\{K,K+1\}\cap A$; if it is empty then, by induction,
so is $\{k,k+1\}\cap A$ for all $k\ge K$, and we conclude that $A$~is finite
and $A^*=\emptyset$.
The opposite case, when $\{K,K+1\}\subseteq A$, leads 
to $\{k:k\ge K\}\subseteq A$ and hence $A^*=\omega_0^*$.

It is an elementary fact about~$\omega_1$ that for every subset~$A$
of~$\omega_1$ and every map $f:A\to\omega_1$ there are uncountably
many $\alpha<\omega_1$ such that $f[A\cap\alpha]\subseteq\alpha$; 
in particular, if $f$~is a bijection between cofinite sets~$A$ and~$B$ one has
$f[A\cap\alpha]=B\cap\alpha$ for arbitrarily large~$\alpha$. 
This then implies that trivial autohomeomorphisms of~$\omega_1^*$
have many non-trivial clopen invariant sets.

And so, if $\omega_0^*$ and $\omega_1^*$ are homeomorphic then
$\omega_1^*$ must have a non-trivial autohomeomorphism.

\section{A non-trivial auto(homeo)morphism}
\label{een}

In this section we prove our main result.
We let $\gamma:\omega_0^*\to\omega_1^*$ be a homeomorphism and use it to construct
a non-trivial autohomeomorphism of~$\omega_0^*$.

We consider the discrete space of cardinality $\aleph_1$
in the guise of $\Z\times\omega_1$.
A natural bijection of this set to itself is the shift to the right,
defined by $\sigma(n,\alpha)=\orpr{n+1}\alpha$.
The restriction, $\sigma^*$, of its \v{C}ech-Stone extension, $\beta\sigma$,
to~$(\Z\times\omega_1)^*$ is an autohomeomorphism. 
We prove that $\rho=\gamma^{-1}\circ\sigma^*\circ\gamma$ is a non-trivial 
autohomeomorphism of~$\omega_0^*$.

To this end we assume there is a bijection $g:A\to B$ between cofinite sets that
induces~$\rho$ and establish a contradiction.

\subsection{Properties of $\sigma^*$ and $(\Z\times\omega_1)^*$}

We define three types of sets that will be useful in the proof:
vertical lines $V_n=\{n\}\times\omega_1$, 
horizontal lines $H_\alpha=\Z\times\{\alpha\}$ and
end~sets $E_\alpha=\Z\times[\alpha,\omega_1)$.

These have the following properties.

\begin{claim}
$\sigma^*[V_n^*]=V_{n+1}^*$ for all~$n$.    \qed
\end{claim}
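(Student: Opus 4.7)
The plan is to observe that this claim is essentially a transcription, to the level of the remainder, of the set-theoretic identity
\[
\sigma[V_n]=\sigma\bigl[\{n\}\times\omega_1\bigr]=\{n+1\}\times\omega_1=V_{n+1},
\]
which holds in the underlying discrete space $\Z\times\omega_1$. Everything else is routine \v{C}ech-Stone functoriality.

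First I would invoke the fact that since $\sigma$ is a bijection of the discrete space $\Z\times\omega_1$ onto itself, its \v{C}ech-Stone extension $\beta\sigma$ is an autohomeomorphism of $\beta(\Z\times\omega_1)$; because it restricts to $\sigma$ on the dense set $\Z\times\omega_1$ (which it permutes), it also leaves the remainder $(\Z\times\omega_1)^*$ invariant, so $\sigma^*=\beta\sigma\restriction(\Z\times\omega_1)^*$ is itself an autohomeomorphism. Next, continuity of $\beta\sigma$ gives $\beta\sigma[\cl V_n]\subseteq\cl(\beta\sigma[V_n])=\cl V_{n+1}$, and since $\beta\sigma$ is a homeomorphism of a compact Hausdorff space it is closed, so $\beta\sigma[\cl V_n]$ is in fact a closed set containing $V_{n+1}$ and hence equal to $\cl V_{n+1}$. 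Intersecting with $(\Z\times\omega_1)^*$ then yields $\sigma^*[V_n^*]=V_{n+1}^*$.

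There is no genuine obstacle in this claim; the only micro-step that uses anything beyond bookkeeping is the passage from the inclusion $\beta\sigma[\cl V_n]\subseteq \cl V_{n+1}$ to equality, and that is handled by the compactness argument just indicated (or, equivalently, by noting that $V_n^*$ is the clopen of $(\Z\times\omega_1)^*$ corresponding, via Stone duality, to the equivalence class of $V_n$ modulo finite sets, and that $\sigma$ sends $V_n$ to $V_{n+1}$ exactly, not merely modulo finite sets). That is why the authors are content to dispatch the statement with a \texttt{\textbackslash qed}.
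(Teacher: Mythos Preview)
Your proof is correct and matches the paper's approach: the authors give no proof at all (just a \texttt{\textbackslash qed}), treating the claim as immediate from $\sigma[V_n]=V_{n+1}$, which is exactly the observation you unpack in detail. Your added \v{C}ech--Stone bookkeeping is sound but more than the paper deems necessary.
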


\begin{claim}
$\{H_\alpha^*:\alpha<\omega_1\}$ is a maximal disjoint family of 
$\sigma^*$-invariant clopen sets.
\end{claim}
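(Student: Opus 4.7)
My plan is to verify three things: each $H_\alpha^*$ is clopen in~$(\Z\times\omega_1)^*$; the family is pairwise disjoint and consists of $\sigma^*$-invariant sets; and it is maximal, i.e., every nonempty $\sigma^*$-invariant clopen set meets some~$H_\alpha^*$. The first follows from the general fact that $H_\alpha^*$ is of the form~$A^*$ for a subset $A\subseteq\Z\times\omega_1$; the second is immediate because the $H_\alpha$ are pairwise disjoint and $\sigma[H_\alpha]=H_\alpha$ on the nose, which translates directly to the clopen counterparts.

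For maximality, I would fix a nonempty $\sigma^*$-invariant clopen set $S^*$ and exhibit~$\alpha$ with $S^*\cap H_\alpha^*\neq\emptyset$. The key device is a row decomposition: for each~$\alpha<\omega_1$ put $S_\alpha=\{n\in\Z:\orpr n\alpha\in S\}$, so that $S=\bigcup_\alpha S_\alpha\times\{\alpha\}$. Since $\sigma^*[S^*]=S^*$ is equivalent to $\sigma[S]\mathbin{\triangle}S$ being finite, and this symmetric difference splits along rows as $\bigcup_\alpha\bigl((S_\alpha+1)\mathbin{\triangle}S_\alpha\bigr)\times\{\alpha\}$, the finiteness translates into two conditions: (i)~for all but finitely many~$\alpha$ we have $S_\alpha+1=S_\alpha$ exactly, hence $S_\alpha\in\{\emptyset,\Z\}$; and (ii)~each of the remaining finitely many exceptional rows has its characteristic function changing value only finitely often, hence is eventually constant in both directions.

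The conclusion is now a short case analysis. If some non-exceptional~$\alpha$ has $S_\alpha=\Z$, then $H_\alpha\subseteq S$ gives $H_\alpha^*\subseteq S^*$, and we are done. Otherwise all non-exceptional rows are empty, so $S$ is contained in the finite union of the exceptional horizontal lines; since $S^*\neq\emptyset$ forces $S$ to be infinite, at least one exceptional row $S_{\alpha_i}$ must itself be infinite, and then $S^*\cap H_{\alpha_i}^*=(S_{\alpha_i}\times\{\alpha_i\})^*$ is nonempty.

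The only real pitfall is bookkeeping with mod-finite equalities, but the underlying structural observation, that genuine shift-invariance of a subset of~$\Z$ forces it to be $\emptyset$ or $\Z$, pins down almost all rows completely and reduces the analysis of the exceptional rows to a trivial counting argument.
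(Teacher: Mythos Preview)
Your proof is correct, but the route differs from the paper's. The paper argues the contrapositive directly: given a nonempty clopen set $C^*$ disjoint from every $H_\alpha^*$, each row $C\cap H_\alpha$ is finite while infinitely many rows are nonempty; setting $n_\alpha=\max\{n:\orpr n\alpha\in C\}$ on each nonempty row produces an infinite subset $\{\orpr{n_\alpha+1}\alpha:\alpha\in A\}$ of $\sigma[C]\setminus C$, so $C^*$ cannot be $\sigma^*$-invariant. Your approach instead starts from invariance and derives a structural description of $S$: almost every row is forced to be $\emptyset$ or all of~$\Z$, and a short case split then locates an $H_\alpha^*$ meeting~$S^*$. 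The paper's argument is a three-line witness construction; yours is a little longer but buys extra information, essentially a normal form for $\sigma^*$-invariant clopen sets. One small remark: your condition~(ii) on the exceptional rows is true but not actually invoked in your case analysis; all you use is that the exceptional rows are finite in number.
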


\begin{proof}
It is clear that $\sigma^*[H_\alpha^*]=H_\alpha^*$ for all $\alpha$.

To establish maximality of the family let $C\subseteq\Z\times\omega_1$ be 
infinite and such that $C\cap H_\alpha=^*\emptyset$ for all~$\alpha$; 
then $A=\{\alpha:C\cap H_\alpha\neq\emptyset\}$ is infinite.

For each~$\alpha\in A$ let $n_\alpha=\max\{n:\orpr n\alpha\in C\}$;
then $\{\orpr{n_\alpha+1}\alpha:\alpha\in A\}$ is an infinite subset of 
$\sigma[C]\setminus C$, and hence $\sigma^*[C^*]\neq C^*$.  
\end{proof}

\begin{claim}
If $C\subseteq\Z\times\omega_1$ is such that $H_\alpha^*\subseteq C^*$ for 
uncountably many~$\alpha$ then there are a subset~$S$ of~$V_0$ 
such that $S^*\cap E_\alpha^*\neq\emptyset$ for all~$\alpha$
and $(\sigma^*)^n[S^*]\subseteq C^*$ for all but finitely
many~$n$ in~$\Z$. 
\end{claim}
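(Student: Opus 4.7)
The plan is to reduce this to a straightforward pigeonhole on finite subsets of $\mathbb{Z}$ and then choose $S$ as a copy inside $V_0$ of an uncountable level set of the pigeonhole.

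First I would unwind the definitions. The hypothesis $H_\alpha^*\subseteq C^*$ translates to $H_\alpha\setminus C$ being finite, i.e. the set $F_\alpha=\{n\in\Z:\orpr n\alpha\notin C\}$ is a finite subset of~$\Z$. Let $T\subseteq\omega_1$ be uncountable with $H_\alpha^*\subseteq C^*$ for every $\alpha\in T$. I want to find a single finite $F_0\subseteq\Z$ on which to concentrate.

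Next, since $\Z$~has only countably many finite subsets, the map $\alpha\mapsto F_\alpha$ on~$T$ takes only countably many values, so by the pigeonhole principle there exists a finite set $F_0\subseteq\Z$ for which $T_0=\{\alpha\in T:F_\alpha=F_0\}$ is uncountable. Set $A=T_0$ and $S=\{0\}\times A\subseteq V_0$.

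The two required properties then fall out. Because $A$~is uncountable it is unbounded in~$\omega_1$, so for every $\alpha<\omega_1$ the set $S\cap E_\alpha=\{0\}\times(A\cap[\alpha,\omega_1))$ is infinite, which gives $\emptyset\neq(S\cap E_\alpha)^*\subseteq S^*\cap E_\alpha^*$. For the second property, I use the first claim iteratively: $(\sigma^*)^n[V_0^*]=V_n^*$ (for $n\in\Z$), and more concretely $\sigma^n[S]=\{n\}\times A$. If $n\notin F_0$, then for every $\alpha\in A$ we have $n\notin F_\alpha=F_0$, so $\orpr n\alpha\in C$; thus $\sigma^n[S]\subseteq C$ and hence $(\sigma^*)^n[S^*]\subseteq C^*$. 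Since $F_0$~is finite, this holds for all but finitely many~$n$.

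The only potential obstacle is the pigeonhole step—one might worry that a $\Delta$-system or some more delicate combinatorics is needed—but because the $F_\alpha$ live in the countable set $[\Z]^{<\omega}$, ordinary pigeonhole suffices and no additional structure on the $F_\alpha$ is required.
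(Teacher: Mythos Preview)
Your proof is correct and follows exactly the same approach as the paper: define $F_\alpha=\{n:\orpr n\alpha\notin C\}$, use pigeonhole to find a fixed finite $F$ and an uncountable $A$ with $F_\alpha=F$ for all $\alpha\in A$, and take $S=\{0\}\times A$. The paper's proof is simply a terser version that omits the explicit verification of the two properties, which you have spelled out correctly.
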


\begin{proof}
For each~$\alpha$ such that $H_\alpha^*\subseteq C^*$ let $F_\alpha$ be the 
finite set $\{n:\orpr n\alpha\notin C\}$. 
There are a fixed finite set~$F$ and an uncountable subset~$A$ of~$\omega_1$ such that 
$F_\alpha=F$ for all $\alpha\in A$; $S=\{0\}\times A$ is as required.  
\end{proof}

\subsection{Translation to $\omega_0$ and $\omega_0^*$}

We choose infinite subsets 
$v_n$ (for $n\in\Z$),
and~$h_\alpha$ and~$e_\alpha$ (for $\alpha\in\omega_1$) 
such that for all~$n$ and~$\alpha$ we have
$v_n^*=\gamma^\gets[V_n^*]$,
$h_\alpha^*=\gamma^\gets[H_\alpha^*]$, and 
$e_\alpha^*=\gamma^\gets[E_\alpha^*]$.

Thus we obtain an almost disjoint family 
$\{v_n:n\in\Z\}\cup\{h_\alpha:\alpha\in\omega_1\}$
with properties analogous to those of the family
$\{V_n:n\in\Z\}\cup\{H_\alpha:\alpha\in\omega_1\}$,
these are

\begin{claim}
$g[v_n]=^*v_{n+1}$ for all $n$. \qed
\end{claim}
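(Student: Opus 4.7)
The plan is to derive the claim as an immediate translation of the first claim about $\sigma^*$ through the conjugating homeomorphism~$\gamma$. Since $g:A\to B$ is a bijection between cofinite subsets of~$\omega$ that induces~$\rho$, the autohomeomorphism~$\rho$ acts on clopen sets by the formula $\rho[C^*]=(g[C\cap A])^*$ for every $C\subseteq\omega$; because $A$ is cofinite, $g[C\cap A]$ differs from $g[C]$ (extended arbitrarily on the finite set $\omega\setminus A$) only by a finite set, so in the mod-finite dictionary of the preliminaries we may simply write $\rho[C^*]=(g[C])^*$. Specialising to $C=v_n$ yields $\rho[v_n^*]=(g[v_n])^*$.

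Next I would compute $\rho[v_n^*]$ directly from the definition $\rho=\gamma^{-1}\circ\sigma^*\circ\gamma$, using the defining property $\gamma[v_n^*]=V_n^*$ and the first claim $\sigma^*[V_n^*]=V_{n+1}^*$: chaining these three identities gives $\rho[v_n^*]=\gamma^{-1}[\sigma^*[V_n^*]]=\gamma^{-1}[V_{n+1}^*]=v_{n+1}^*$.

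Comparing the two expressions for $\rho[v_n^*]$, we obtain $(g[v_n])^*=v_{n+1}^*$, and the translation rule for the operator $A\mapsto A^*$ recorded in the preliminaries converts this into $g[v_n]=^* v_{n+1}$. There is no real obstacle here; the statement is a piece of bookkeeping that transports the first claim about $\sigma^*$ through~$\gamma$, using only the fact that a trivial autohomeomorphism acts on clopen sets~$C^*$ via $C\mapsto g[C]$ modulo finite.
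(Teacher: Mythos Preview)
Your argument is correct and is exactly the intended unpacking: the paper marks this claim with an immediate \qed, treating it as a direct translation of $\sigma^*[V_n^*]=V_{n+1}^*$ through~$\gamma$ together with the fact that $g$ induces~$\rho$, which is precisely what you wrote out.
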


\begin{claim}
$\{h_\alpha^*:\alpha<\omega_1\}$ is a maximal disjoint family of 
$g^*$-invariant clopen sets. \qed
\end{claim}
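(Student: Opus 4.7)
The plan is a direct transfer of the corresponding claim for~$\sigma^*$ and $\{H_\alpha^*:\alpha<\omega_1\}$ across the homeomorphism~$\gamma$. Because $\rho=\gamma^{-1}\circ\sigma^*\circ\gamma$ and $g$ is assumed to induce~$\rho$, the autohomeomorphism $g^*$ of~$\omega^*$ coincides with~$\rho$ on clopen sets: for each $C\subseteq\omega$ one has $g^*[C^*]=(g[C])^*=\rho[C^*]$. Thus $\gamma$ conjugates $g^*$ to~$\sigma^*$, and pulling structure back along~$\gamma^\gets$ should automatically preserve disjointness, invariance, and maximality.

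First I would verify disjointness and invariance. Pairwise disjointness of the $h_\alpha^*$ is immediate, since $\gamma^\gets$ commutes with intersections of clopen sets and the $H_\alpha^*$ are pairwise disjoint. For invariance I compute $\gamma\circ g^*[h_\alpha^*]=\sigma^*[\gamma[h_\alpha^*]]=\sigma^*[H_\alpha^*]=H_\alpha^*$, so $g^*[h_\alpha^*]=h_\alpha^*$, which at the level of subsets of~$\omega$ reads $g[h_\alpha]=^*h_\alpha$.

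For maximality I would argue contrapositively. Suppose $D\subseteq\omega$ is such that $D^*$ is nonempty, $g^*$-invariant, and disjoint from every~$h_\alpha^*$. Applying~$\gamma$, the set $\gamma[D^*]$ is a nonempty clopen subset of~$(\Z\times\omega_1)^*$, it is disjoint from every~$H_\alpha^*$ (since $\gamma$ is a bijection and $\gamma[h_\alpha^*]=H_\alpha^*$), and it is $\sigma^*$-invariant by the same conjugacy. This contradicts the earlier maximality claim for the family $\{H_\alpha^*\}$.

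I do not expect any real obstacle here: the whole statement is a conjugacy transport, and the only mild point to check is that ``$g^*$-invariant'' in~$\omega^*$ matches ``$\sigma^*$-invariant'' in $(\Z\times\omega_1)^*$ under~$\gamma$, which is built into the definition of~$\rho$ together with the hypothesis that $g$ induces it. This is presumably why the claim is marked as immediate in the text.
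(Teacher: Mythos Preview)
Your proposal is correct and is exactly the approach the paper intends: the claim is marked with a bare \qed\ because it is a direct conjugacy transfer of the corresponding claim for $\{H_\alpha^*\}$ via~$\gamma$, and you have simply spelled out those routine details. There is nothing to add.
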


\begin{claim}\label{claim:big-S}
If $c$ is infinite and $h_\alpha\subseteq^*c$ for uncountably many~$\alpha$ 
then there is a subset~$s$ of~$v_0$ such that $s\cap e_\alpha$ is infinite 
for all~$\alpha$ and such that $g^n[s]\subseteq^*c$ for all but finitely 
many~$n$ in~$\Z$. \qed
\end{claim}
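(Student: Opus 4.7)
The plan is to transfer the statement to $(\Z\times\omega_1)^*$ via the homeomorphism~$\gamma$, invoke the analogous claim already proved for~$\sigma^*$, and then pull the resulting witness back through~$\gamma^{-1}$. The paper has set up $v_n$, $h_\alpha$, $e_\alpha$ precisely so that Stone duality intertwines them with $V_n$, $H_\alpha$, $E_\alpha$, and $g$ has been chosen to induce $\rho=\gamma^{-1}\circ\sigma^*\circ\gamma$ on~$\omega^*$; so the translation should be essentially mechanical.

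Concretely, I would first pick $C\subseteq\Z\times\omega_1$ with $C^*=\gamma[c^*]$, which is possible since $\gamma[c^*]$ is a clopen subset of~$\omega_1^*$. Because $\gamma$ preserves inclusion and sends $h_\alpha^*$ to~$H_\alpha^*$ and $c^*$ to~$C^*$, the hypothesis ``$h_\alpha\subseteq^*c$ for uncountably many~$\alpha$'' transfers to ``$H_\alpha^*\subseteq C^*$ for uncountably many~$\alpha$''. Next I would apply the preceding claim (the $\sigma^*$-version) to~$C$, obtaining a set $T\subseteq V_0$ such that $T^*\cap E_\alpha^*\neq\emptyset$ for every $\alpha<\omega_1$ and $(\sigma^*)^n[T^*]\subseteq C^*$ for all but finitely many~$n\in\Z$.

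To finish, choose $S\subseteq v_0$ with $S^*=\gamma^\gets[T^*]$. Applying $\gamma^{-1}$ to $T^*\cap E_\alpha^*\neq\emptyset$ yields $S^*\cap e_\alpha^*\neq\emptyset$, which is equivalent to $S\cap e_\alpha$ being infinite for every~$\alpha$. Since $g$ induces~$\rho$, the iterate $g^n$ induces $\rho^n=\gamma^{-1}\circ(\sigma^*)^n\circ\gamma$, hence $(g^n[S])^*=\rho^n[S^*]=\gamma^\gets\bigl[(\sigma^*)^n[T^*]\bigr]\subseteq\gamma^\gets[C^*]=c^*$ for all but finitely many~$n$, so $g^n[S]\subseteq^*c$ for cofinitely many~$n$.

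There is no genuine obstacle beyond bookkeeping: one has to observe that the correspondence $(g^n[x])^*=\rho^n[x^*]$ is valid on the Stone side and that the finite exceptional set of~$n$'s on the $(\Z\times\omega_1)^*$ side passes back unchanged through~$\gamma^{-1}$. Both facts are immediate from $g$ inducing~$\rho$ and from~$\gamma$ being a homeomorphism, which is presumably why the authors felt entitled to append~\qed directly to the claim.
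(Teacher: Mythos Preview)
Your proof is correct and is exactly the translation the paper intends: Claim~\ref{claim:big-S} is the pullback of the corresponding $\sigma^*$-claim through~$\gamma$, which is why the authors simply append~\qed. The only detail worth making explicit is that one can indeed take $S\subseteq v_0$ (not merely $S\subseteq^* v_0$): since $T\subseteq V_0$ gives $\gamma^\gets[T^*]\subseteq v_0^*$, any representative can be intersected with~$v_0$ without changing its $*$-class.
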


\subsection{Orbits of $g$}

By defining finitely many extra values we can assume that one of~$A$ and~$B$
is equal to~$\omega$ and, upon replacing $\sigma$ by its inverse, we may as 
well assume that $A=\omega$.

For $k\in\omega$ we let $I_k=\{n\in\Z:g^n(k)$ is defined$\}$ and 
$O_k=\{g^n(k):n\in I_k\}$ (the orbit of~$k$).

\begin{claim}\label{claim:split}
Each $h_\alpha$ splits only finitely many orbits.  
\end{claim}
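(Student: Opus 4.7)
The plan is to exploit the $g^*$-invariance of $h_\alpha^*$ recorded in the preceding claim. Since $g^*[h_\alpha^*]=h_\alpha^*$ and $g^*$ is the extension of the bijection $g$, we obtain $g[h_\alpha]=^*h_\alpha$; equivalently, both $g[h_\alpha]\setminus h_\alpha$ and $h_\alpha\setminus g[h_\alpha]$ are finite. Pulling these back via $g$ (a bijection), the set
$$F_\alpha=\{k\in\omega:k\in h_\alpha \text{ and } g(k)\notin h_\alpha\}\cup\{k\in\omega:k\notin h_\alpha \text{ and } g(k)\in h_\alpha\}$$
is finite. So for every $k\in\omega\setminus F_\alpha$ we have $k\in h_\alpha$ iff $g(k)\in h_\alpha$: once the orbit-traversal map leaves $F_\alpha$, it preserves $h_\alpha$-membership.

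Now suppose $h_\alpha$ splits an orbit $O_k$, in the sense that $O_k$ meets both $h_\alpha$ and $\omega\setminus h_\alpha$. Pick $k_1\in O_k\cap h_\alpha$ and $k_2\in O_k\setminus h_\alpha$; because $A=\omega$ all forward iterates are defined, so one of $k_1,k_2$ is obtained from the other by finitely many applications of $g$. Running along that finite $g$-chain inside $O_k$, the $h_\alpha$-membership must flip at some step, producing a $j\in O_k$ with $j\in h_\alpha$ iff $g(j)\notin h_\alpha$; the previous paragraph forces $j\in F_\alpha$. Hence every orbit split by $h_\alpha$ meets $F_\alpha$, and since the orbits partition $\omega$ into pairwise disjoint pieces, at most $|F_\alpha|$ of them are split.

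The only genuine work is extracting the finite ``defect'' $F_\alpha$ from the almost-equality $g[h_\alpha]=^*h_\alpha$; after that the counting argument is essentially a single line. The argument is insensitive to whether ``splits'' is read as ``both pieces of $O_k$ are nonempty'' or as the stronger ``both pieces are infinite'', so no extra care is needed on that front.
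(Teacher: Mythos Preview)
Your argument is correct and follows essentially the same route as the paper. The paper phrases the finite ``defect'' set as the symmetric difference $g[h_\alpha]\triangle h_\alpha$ itself (rather than its $g$-preimage $F_\alpha$), and observes directly that a split orbit contributes a point to $g[h_\alpha]\setminus h_\alpha$ or to $h_\alpha\setminus g[h_\alpha]$; your chain-walking argument and the paper's one-step argument are the same idea in slightly different packaging.
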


\begin{proof}
If $h_\alpha$ splits $O_k$ then there is an $n\in I_k$ such that
$g^n(k)\in h_\alpha$ but (at least) one of $g^{n+1}(k)$ and $g^{n-1}(k)$
is not in~$h_\alpha$.
So either $g^{n+1}(k)\in g[h_\alpha]\setminus h_\alpha$ or
$g^n(k)\in h_\alpha\setminus g[h_\alpha]$.

It follows that each orbit split by~$h_\alpha$ meets the symmetric difference 
of~$g[h_\alpha]$  and~$h_\alpha$; 
as the latter set is finite and orbits are disjoint only finitely many orbits 
can intersect it.
\end{proof}

We divide $\omega$ into two sets: $F$, the union of all finite $g$-orbits,
and $G$, the union of all infinite $g$-orbits.

\begin{claim}\label{claim:meet-infinite}
If $O_k$ is infinite then there are at most two $\alpha$s for which
$O_k\cap h_\alpha$ is infinite.  
\end{claim}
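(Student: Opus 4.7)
The plan is to imitate Claim~\ref{claim:split}. The crucial input is that each $h_\alpha^*$ is $g^*$-invariant: since $g^*$ is the autohomeomorphism induced by the bijection $g$ between cofinite sets, invariance translates to $g[h_\alpha]=^*h_\alpha$, i.e., the symmetric difference $h_\alpha\triangle g[h_\alpha]$ is finite.

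Fix the infinite orbit $O_k$ and, for each $\alpha<\omega_1$, record the trace of $h_\alpha$ on this orbit as $T_\alpha=\{n\in I_k:g^n(k)\in h_\alpha\}\subseteq I_k\subseteq\Z$. Call $n\in I_k$ a \emph{boundary point} of $T_\alpha$ in~$I_k$ if it has an $I_k$-neighbour (one of $n-1$, $n+1$) lying on the opposite side of~$T_\alpha$. Exactly as in Claim~\ref{claim:split}, each such boundary point produces an element of $O_k\cap (h_\alpha\triangle g[h_\alpha])$: a transition $n\in T_\alpha$, $n+1\notin T_\alpha$ forces $g^{n+1}(k)\in g[h_\alpha]\setminus h_\alpha$, while $n\in T_\alpha$, $n-1\notin T_\alpha$ forces $g^n(k)\in h_\alpha\setminus g[h_\alpha]$. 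By injectivity of~$g$ and the fact that the orbit points are pairwise distinct, different boundary points yield different witnesses. Since the symmetric difference is finite, $T_\alpha$ has only finitely many boundary points in~$I_k$, and therefore decomposes as a finite union of maximal subintervals of~$I_k$.

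Now, because $O_k$ is infinite, $I_k$ is an infinite interval of~$\Z$ containing~$0$, so it possesses at most two infinite tails (just one if $I_k$ is a half-line, two if $I_k=\Z$). If $T_\alpha$ is infinite yet has only finitely many boundary points, then at least one of its constituent subintervals of~$I_k$ is unbounded, and hence $T_\alpha$ must contain some tail of~$I_k$ entirely. The conclusion now falls out by pigeonhole: were there three distinct $\alpha_1,\alpha_2,\alpha_3$ with each $O_k\cap h_{\alpha_i}$ infinite, two of the corresponding sets $T_{\alpha_i}$, $T_{\alpha_j}$ would have to contain the same tail of~$I_k$, giving $T_{\alpha_i}\cap T_{\alpha_j}$ — and therefore $h_{\alpha_i}\cap h_{\alpha_j}$ — infinite, contradicting the almost-disjointness of $\{h_\alpha:\alpha<\omega_1\}$ inherited from the disjointness of $\{H_\alpha^*:\alpha<\omega_1\}$. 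The only delicate step is the boundary-point bookkeeping, but that is essentially a repetition of the argument in Claim~\ref{claim:split}; the rest is pigeonhole on two tails.
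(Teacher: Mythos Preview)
Your proof is correct and rests on the same mechanism as the paper's --- namely that transitions of the orbit across $h_\alpha$ land in the finite set $h_\alpha\triangle g[h_\alpha]$ --- but the two arguments are packaged differently. The paper splits into cases according to whether $k\in\omega\setminus B$ (so $I_k=\omega$) or $k\in\omega\setminus X$ (so $I_k=\Z$), invokes the maximality of $\{h_\alpha^*\}$ to locate an $\alpha$ with $O_k\cap h_\alpha$ infinite, and then proves the stronger conclusion that $O_k$ (respectively each half of $O_k$) is almost contained in that single $h_\alpha$. You instead work uniformly with $I_k$ as an abstract interval, show every $T_\alpha$ is a finite union of subintervals, and finish by pigeonhole on the at most two tails together with almost-disjointness of the $h_\alpha$. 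Your route avoids the case split and the appeal to maximality, and proves exactly the claim as stated; the paper's route yields the extra information $O_k\subseteq^* h_{\alpha_1}\cup h_{\alpha_2}$, though this is not used downstream.
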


\begin{proof}
First we let $k\in\omega\setminus B$; in this case $I_k=\omega$.
The set $O_k^*$ is $g^*$-invariant, hence $O_k\cap h_\alpha$ is infinite
for some~$\alpha$.
In fact: $O_k\subseteq^*h_\alpha$ (and so $\alpha$~is unique); 
for let $J=\{n: g^n(k)\in h_\alpha$ and $g^{n+1}(k)\notin h_\alpha\}$, then
$\{g^{n+1}(k):n\in J\}\subseteq g[h_\alpha]\setminus h_\alpha$ so that $J$~is
finite.

It follows that the set $X=\bigcup\{O_k:k\in\omega\setminus B\}$ is,
save for a finite set, covered by finitely many of the~$h_\alpha$.

Next let $k\in \omega\setminus X$; in this case $I_k=\Z$
and both sets $\{g^n(k):n<0\}^*$ and $\{g^n(k):n\ge0\}^*$ are $g^*$-invariant.
The argument above applied to both sets yields $\alpha_1$ and $\alpha_2$
(possibly identical) such that $\{g^n(k):n<0\}\subseteq^* h_{\alpha_1}$ and
 $\{g^n(k):n\ge0\}\subseteq^* h_{\alpha_2}$. 
\end{proof}

The following claim is the last step towards our final contradiction.

\begin{claim}
For all but countably many $\alpha$ we have $h_\alpha\subseteq^* F$.  
\end{claim}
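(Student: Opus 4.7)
The plan is to argue by contradiction: assume there are uncountably many $\alpha$ for which $h_\alpha\cap G$ is infinite, and derive a contradiction with Claim~\ref{claim:meet-infinite}. The key idea is that the almost-invariance $g[h_\alpha]=^* h_\alpha$ (which comes from the previous claim, since $h_\alpha^*$ is $g^*$-invariant) sharpens to strict invariance when restricted to almost every $g$-orbit, and this rigid behaviour is incompatible with $g$ acting as a shift on infinite orbits.

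First I would fix such an $\alpha$ and decompose $\omega$ as the disjoint union of the orbits~$O_k$. Since $g$ sends each~$O_k$ into itself, the symmetric difference $g[h_\alpha]\triangle h_\alpha$ splits as the disjoint union of the sets $T_k:=g[h_\alpha\cap O_k]\triangle(h_\alpha\cap O_k)$. Almost-invariance makes this symmetric difference finite, so $T_k=\emptyset$ for all but finitely many~$k$; equivalently, $g$ restricts to an actual bijection of $h_\alpha\cap O_k$ onto itself on a cofinite set of orbits.

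Next I would exploit the shift-like nature of $g$ on infinite orbits: on a one-way infinite orbit $g$ is conjugate to the successor map on~$\omega$, and on a bi-infinite orbit to the successor map on~$\Z$, and in either case the only strictly $g$-invariant subset is $\emptyset$ or the whole orbit. Combined with the previous step, this shows that for all but finitely many infinite orbits~$O_k$ we have $h_\alpha\cap O_k\in\{\emptyset,O_k\}$. If $h_\alpha\cap G$ is infinite, then either one of the finitely many exceptional orbits already contributes an infinite intersection, or else some non-exceptional infinite orbit is entirely swallowed by $h_\alpha$; in either case some infinite orbit~$O_k$ satisfies that $O_k\cap h_\alpha$ is infinite.

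Applying Claim~\ref{claim:meet-infinite} then closes the argument: each infinite orbit is responsible for at most two such~$\alpha$, and there are only countably many infinite orbits inside~$\omega$, so at most countably many $\alpha$ can have $h_\alpha\cap G$ infinite. The step that I expect to need the most care is the bookkeeping in the middle paragraph, turning the single finiteness fact ``$g[h_\alpha]\triangle h_\alpha$ is finite'' into strict invariance on cofinitely many orbits; once that is in hand, the shift-rigidity remark and the two-per-orbit bound from Claim~\ref{claim:meet-infinite} deliver the conclusion immediately.
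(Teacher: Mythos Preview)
Your argument is correct and follows essentially the same route as the paper's: both show that if $h_\alpha\cap G$ is infinite then $h_\alpha$ must meet some infinite orbit in an infinite set, and then invoke Claim~\ref{claim:meet-infinite} together with the countability of the set of orbits. The only difference is cosmetic: where the paper simply cites Claim~\ref{claim:split} to get that $h_\alpha$ splits only finitely many orbits, you re-derive the equivalent statement (that $h_\alpha\cap O_k$ is strictly $g$-invariant for cofinitely many~$k$) directly from the finiteness of $g[h_\alpha]\triangle h_\alpha$; also, the contradiction framing is unnecessary since your argument is really direct.
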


\begin{proof}
By Claim~\ref{claim:meet-infinite} the set $D$ of those~$\alpha$ for which 
$h_\alpha$~meets an infinite orbit in an infinite set is countable: 
each such orbit meets at most two~$h_\alpha$s and there are only countably 
many orbits of course.  

If $\alpha\notin D$ then $h_\alpha$ meets every infinite orbit in a finite
set and it splits only finitely many of these, which means that it intersects
only finitely many infinite orbits, and hence that it meets $G$ in a finite set.
\end{proof}

\subsection{The final contradiction}

We now apply Claim~\ref{claim:big-S} to~$F$.
It follows that there is an infinite subset $s$ of~$v_0$ such that
$g^n[s]\subseteq^*F$ for all but finitely many~$n$.
In fact, as $F$ is $g$-invariant one~$n_0$ suffices:
we can then first assume that $g^{n_0}[s]\subseteq F$ 
(drop finitely many points from~$s$) and then use $g$-invariance of $F$ to 
deduce that $g^n[s]\subseteq F$ for all~$n$.

Let $E=\bigcup_{k\in s}O_k$; as a union of orbits this set is $g$-invariant.
There must therefore be an~$\alpha$ such that $E\cap h_\alpha$ is infinite.
Now there are infinitely many~$k\in E$ such that $h_\alpha$ intersects~$O_k$;
by Claim~\ref{claim:split} $h_\alpha$ must contain all but finitely many
of these.
This means that $O_k\subset h_\alpha$ for infinitely many~$k\in s$ and hence
that $h_\alpha\cap v_0$ is infinite, which is a contradiction because
$h_\alpha$ and $v_0$ were assumed to be almost disjoint.

\subsection{An alternative contradiction}

For each $\alpha$ the set $H_\alpha^*$ splits into two minimal
$\sigma^*$-invariant clopen sets, to wit
$\{\orpr n\alpha:n<0\}^*$ and $\{\orpr n\alpha:n\ge0\}^*$ 
(apply the argument in subsection~\ref{subsec:autohom}).
Therefore the same is true for each~$h_\alpha^*$ with respect to~$\rho$.
However, with the notation as above we find uncountably many
$\rho$-invariant clopen subsets of~$h_\alpha^*$, for every
infinite subset~$t$ of~$s$ we can take $(\bigcup_{k\in t}O_k)^*$.

\section{A question}\label{sec:question}

Our result does not settle the Katowice problem but it may point toward a final
solution.
We list the known consequences of the existence of a homeomorphism between
$\omega_0^*$ and~$\omega_1^*$.
\begin{enumerate}
\item $2^{\aleph_0}=2^{\aleph_1}$
\item $\dee=\aleph_1$
\item there is a strong-$Q$-sequence
\item there is a strictly increasing $\omega_1$-sequence~$\mathcal{O}$ of clopen
      sets in~$\omega_0^*$ such that $\bigcup\mathcal{O}$~is dense
      and $\omega_0^*\setminus\bigcup\mathcal{O}$ contains no $P$-points
\end{enumerate}
A strong-$Q$-sequence is a sequence $\langle A_\alpha:\alpha\in\omega_1\rangle$
of infinite subsets of~$\omega$ with the property that for every choice
$\langle x_\alpha:\alpha\in\omega_1\rangle$ of subsets
($x_\alpha\subseteq A_\alpha$)
there is a single subset~$x$ of~$\omega$ such that $x_\alpha=^* A_\alpha\cap x$
for all~$\alpha$.
In~\cite{MR801424} Stepr\=ans showed the consistency of the existence of
strong-$Q$-sequences with~$\ZFC$.

Not only is each of these consequences consistent with~$\ZFC$ but 
in~\cite{david} Chodounsk\'y provides a model where these consequences hold 
simultaneously.

The three structural consequences can all be obtained using the same sets
that we employed in the construction of the non-trivial autohomeomorphism.
We use the sets~$v_n$ to make $\omega$ resemble~$\Z\times\omega$: 
first make them pairwise disjoint and then identify~$v_n$ 
with~$\{n\}\times\omega$ via some bijection between~$\omega$ 
and~$\Z\times\omega$.

Our consequences are now obtained as follows
\begin{enumerate}\setcounter{enumi}{1}
\item For every $\alpha<\omega_1$ define $f_\alpha:\Z\to\omega$ by 
      $f_\alpha(m)=\min\{n:\orpr mn\in e_\alpha\}$; 
      the family $\{f_\alpha:\alpha<\omega_1\}$ witnesses $\dee=\aleph_1$:
      for every $f:\Z\to\omega$ there is an~$\alpha$ such that
      $\{n:f(n)\ge f_\alpha(n)\}$ is finite.
\item The family $\{h_\alpha:\alpha\in\omega_1\}$ is a strong-$Q$-sequence:
       assume a subset $x_\alpha$ of~$h_\alpha$ is given for all~$\alpha$;
       then there is a single subset~$x$ of~$\omega$ such that
       $x^*\cap h_\alpha^*=x_\alpha^*$ for all~$\alpha$.
       To see this take $X_\alpha\subseteq H_\alpha$ such 
       that $X_\alpha^*=\gamma[x_\alpha^*]$ and put $X=\bigcup_\alpha X_\alpha$
       then $X\cap H_\alpha=X_\alpha$ and 
       hence $\gamma\preim[X^*]\cap h_\alpha^*=x_\alpha^*$ for all~$\alpha$.
\item Let $b_\alpha$ be the complement of~$e_\alpha$ and 
      let $B_\alpha$ be the complement of~$E_\alpha$. 
       Then $\langle b_\alpha^*:\alpha<\omega_1\rangle$ is the required
       sequence: in $\omega_1^*$ the complement of~$\bigcup_\alpha B_\alpha^*$
       consists of the uniform ultrafilters on~$\omega_1$;
       none of these is a P-point.
\end{enumerate}
To this list we can now add the existence of a non-trivial 
auto(homeo)morphism~$\rho$ and a disjoint family $\{v_n:n\in\Z\}$ of 
infinite subsets of~$\omega_0$ such that
\begin{enumerate}\setcounter{enumi}{4}
\item $\{v_n:n\in\Z\}\cup\{h_\alpha:\alpha<\omega_1\}$ is almost disjoint,
\item $\rho[v_n^*]=v_{n+1}^*$ for all $n$,
\item $\{h_\alpha^*:\alpha<\omega_1\}$ is a maximal disjoint family
      of $\rho$-invariant sets, and
\item for each $\alpha$ the sets $(h_\alpha\cap\bigcup_{n<0}v_n)^*$
      and $(h_\alpha\cap\bigcup_{n\ge0}v_n)^*$ are minimal clopen 
      $\rho$-invariant sets.
\end{enumerate}
Since the family $\{h_\alpha:\alpha<\omega_1\}$ is a strong-$Q$-sequence
one can find for any (uncountable) subset~$A$ of~$\omega_1$ an infinite 
set~$X_A$ such that $h_\alpha\subseteq^* X_A$ if $\alpha\in A$
and $h_\alpha\cap X_A=^*\emptyset$ if $\alpha\notin A$.

Our proof shows that $\rho$~is in fact not trivial on every such set $X_A$
whenever $A$~is uncountable.

\begin{remark}\label{rem:aleph2}
  Consequence (1) above is the equality $2^{\aleph_0}=2^{\aleph_1}$; it does
  not specify the common value any further.
  We can actually assume, without loss of generality, that
  $2^{\aleph_0}=2^{\aleph_1}=\aleph_2$.
  Indeed, one can collapse $2^{\aleph_1}$ to~$\aleph_2$ by adding a Cohen subset
  of~$\omega_2$; this forcing adds no new subsets of~$\omega_1$
  of cardinality~$\aleph_1$ or less, so any isomorphism between
  $\pow(\omega_0)/\fin$ and $\pow(\omega_1)/\fin$ will survive.
\end{remark}

\begin{remark}
  It is straightforward to show that the completions of $\pow(\omega_0)/\fin$
  and $\pow(\omega_1)/\fin$ \emph{are} isomorphic, e.g., by taking maximal
  almost families of countable sets in both~$\pow(\omega_0)$
  and~$\pow(\omega_1)$ of cardinality~$\cee$.
  These represent maximal antichains in the completions consisting of mutually
  isomorphic elements and a global isomorphism will be the result of
  combining the local isomorphisms.
  This argument works for all cardinals~$\kappa$ that satisfy
  $\kappa^{\aleph_0}=\cee$, see~\cite{david}*{Corollary~1.2.7}.
  Thus, it will most likely be the incompleteness properties
  of the algebras that decide the outcome of the Katowice problem.
\end{remark}

\section{Some consistency}
\label{sec:consistency}

To see what is possible consistency-wise we indicate how some of the features
of the edifice that we erected, 
based on the assumption that $\omega_0^*$ and $\omega_1^*$ are homeomorphic,
can occur simultaneously.
For this we consider the ideal~$\calI$ generated by the finite sets together
with the sets $b_\alpha$ (the complements of the sets~$e_\alpha$).
This ideal satisfies the following properties:
\begin{enumerate}
\item $\calI$ is non-meager,\label{ctbl1}
\item $\calI$ intersects every P-point, \label{ctbl2}
\item $\calI$ is generated by the increasing \label{ctbl3}
      tower $\{b_\alpha:\alpha<\omega_1\}$, and 
\item the differences $b_{\alpha+1}\setminus b_\alpha$ form a 
      strong-$Q$-sequence.\label{ctbl4}
\end{enumerate}
We have already established properties~\eqref{ctbl2}, \eqref{ctbl3} 
and~\eqref{ctbl4}.

We are left with property~\eqref{ctbl1}; that $\calI$ must be non-meager
was already known to B.~Balcar and P.~Simon.

We recall that a family of subsets of $\omega$ is said to be meager
if, upon identifying sets with their characteristic functions,
it is meager in the product space~$2^\omega$.

\begin{lemma}\label{countisnonmeager}
$\calI$ is not meager.
\end{lemma}

\begin{proof}
We assume $\calI$~is meager and use a countable cover by closed nowhere
dense sets to construct a sequence $\langle F_n:n\in\omega\rangle$ of
pairwise disjoint finite sets such that for every infinite set~$X$
the set $F_X=\bigcup_{n \in X} F_n$ does not belong to~$\calI$ ---
this means that $\gamma[F_X^*]$ is associated to an uncountable
subset~$G_X$ of~$\Z\times\omega_1$.

Fix a family $\{X_s: s\in\functions{<\omega}{2}\}$ of infinite subsets
of~$\omega$ such that $X_s\supseteq X_t$, and hence $G_{X_s}\supseteq^*G_{X_t}$,
whenever $s\subseteq t$, and $X_s\cap X_t=\emptyset$,
and hence $G_{X_s}\cap G_{X_t}=^*\emptyset$,
whenever $s$ and $t$ are incompatible.
Using this we can fix $\alpha\in\omega_1$ such that all exceptions in the
previous sentence occur in $\Z\times\alpha$.

Therefore the family $\{G_{X_s}\cap E_\alpha:s\in\functions{<\omega}2\}$
satisfies the relations above without the modifier `modulo finite sets'.
This implies that if $n\in\Z$ and $\beta\ge\alpha$ then there is at most one
branch~$y_{n,\beta}$ in the binary tree $\functions{<\omega}2$ such that
$\orpr n\beta\in G_{X_s}$ for all $s\in y_{n,\beta}$.

Now, since $2^{\aleph_0}=2^{\aleph_1}$ there is a branch, $y$, different from
all~$y_{n,\beta}$.
We can take an infinite set $X$ such that $X\subseteq^* X_s$ for all $s\in y$.
This means of course that $G_X$~is uncountable and that 
$G_X\subseteq^* G_{X_s}$ for all $s\in y$
and hence that there is $\beta\ge\alpha$ such that 
$G_X\setminus G_{X_s}\subseteq\Z\times\beta$ for all~$s$.
However, if $\orpr n\gamma\in G_X$ and $\gamma\ge\beta$ then we should have
both $\orpr n\gamma\in\bigcap_{s\in y}G_{X_s}$ by the above
and $\orpr n\gamma\notin\bigcap_{s\in y}G_{X_s}$ because $y\neq y_{n,\gamma}$.
\end{proof}

The methods from~\cite{david} and~\cite{MR2944766}
can be used to establish the consistency
of $\dee=\aleph_1$ with the existence of an ideal with the 
properties~\eqref{ctbl1} through~\eqref{ctbl4} of~$\calI$
--- let us call such an ideal countable-like.
We have the following result, which is Theorem~4.5.1 from~\cite{david}.

\begin{theorem}
It is consistent with $\ZFC$ that $\dee = \aleph_1$ and 
there is countable-like ideal $\calI$ on~$\omega$.
\end{theorem}

\begin{proof}
We start with a model of $\ZFC+\GCH$ and take an increasing
tower $\calT = \{T_\alpha : \alpha \in \omega_1\}$ in $\pow(\omega)$
that generates a non-meager ideal and let $\calA$ denote the almost
disjoint family of differences 
$\{T_{\alpha+1} \setminus T_\alpha :\alpha \in \omega_1 \}$ --- we
write $A_\alpha=T_{\alpha+1}\setminus T_\alpha$.
Because of the~$\GCH$ we can arrange that 
$\{\omega\setminus T_\alpha:\alpha\in\omega_1\}$ generates a P-point,
which more than suffices for our purposes.

We set up an iterated forcing construction, with countable supports,
of proper $\functions\omega\omega$-bounding partial orders that will
produce a model in which $\dee=\aleph_1$ and the ideal~$\calI$ generated
by~$\calT$ is countable-like.
By the $\functions\omega\omega$-bounding property we get $\dee=\aleph_1$
and the non-meagerness of~$\calI$ for free.

To turn $\calA$ into a strong-$Q$-sequence we use guided Grigorieff forcing,
as in~\cite{MR2944766}: given a choice 
$F=\langle F_\alpha:\alpha\in\omega_1\rangle$, where each~$F_\alpha$
is a subset of~$A_\alpha$, we let $\Gr(\calT,F)$ be the partial order
whose elements are functions of the form $p:T_\alpha\to2$, with the property
that $p\preim(1)\cap A_\beta=^* F_\beta$ for all $\beta<\alpha$.
The ordering is by extension: $p\le q$ if $p\supseteq q$.
This partial order is proper and $\functions\omega\omega$-bounding and
if $G$~is generic on~$\Gr(\calT,F)$ then $X=(\bigcup G)\preim(1)$ is such that
$X\cap A_\alpha=^*F_\alpha$ for all~$\alpha$.
As indicated in~\cite{MR2944766}, by appropriate bookkeeping one can 
set up an iteration that turns $\calA$ into a strong-$Q$-sequence.

One can interleave this iteration with one that destroys all P-points;
this establishes property~\eqref{ctbl2} of countable-like ideals
in a particularly strong way.
For every ideal~$\calI$ that is dual to a non-meager P-filter one considers
the `normal' Grigorieff partial order $\Gr(\calI)$ associated to~$\calI$, 
which consists of functions with domain in~$\calI$ and $\{0,1\}$ as codomain.
The power $\Gr(\calI)^\omega$ and proper and $\functions\omega\omega$-bounding
and forcing with it creates countably many sets that prevent the filter 
dual to~$\calI$ from being extended to a P-point, even in further
extensions by proper $\functions\omega\omega$-bounding partial orders.

All bookkeeping can be arranged so that all potential choices for~$\calA$
and all potential non-meager P-filters can be dealt with.
\end{proof}

We end on a cautionary note.
Though the result above raises the hope of building a model in which one
has a structure akin to that in Section~\ref{sec:question},
the construction has the tendency of going completely in the wrong
direction as regards autohomeomorphisms of~$\omega_0^*$.
As explained in Chapter~5 of~\cite{david}, if one has an 
autohomeomorphism~$\varphi$ that is not trivial on any element of the 
filter dual to~$\calI$ then the generic filter on~$\Gr(\calI)$ 
destroys~$\varphi$ in the following sense: there is no possible
value for~$\varphi(X^*)$, where $X=(\bigcup G)\preim(1)$.
The reason is that this value should satisfy 
$\varphi(p\preim(1)^*)\subseteq\varphi(X^*)$ and 
$\varphi(p\preim(0)^*)\cap\varphi(X^*)=\emptyset$ for all~$p\in G$
and a density argument shows that no such set exists in~$V[G]$.

Thus, if things go really wrong one ends up with a model in which for every
non-meager P-filter $\calF$ and every autohomeomorphism there is a member
of~$\calF$ on which $\varphi$~must be trivial.
This would be in contradiction with the last sentence just before
Remark~\ref{rem:aleph2};
moreover, Theorem~5.3.12 in~\cite{david} shows that 
with some extra partial orders this can actually be made to happen.

\begin{bibdiv}
  
\begin{biblist}

\bib{MR511955}{article}{
   author={Balcar, Bohuslav},
   author={Frankiewicz, Ryszard},
   title={To distinguish topologically the spaces $m^{\ast} $. II},
   language={English, with Russian summary},
   journal={Bulletin de l'Acad\'emie Polonaise des Sciences. %
            S\'erie des Sciences Math\'ematiques, Astronomiques et Physiques}, 
   volume={26},
   date={1978},
   number={6},
   pages={521--523},
   issn={0001-4117},
   review={\MR{511955 (80b:54026)}},
}

\bib{david}{thesis}{
   author={Chodounsk{\'y}, David},
 title={On the Katowice problem},
 type={PhD thesis},
 date={2011},
 institution={Charles University, Prague}
}

\bib{MR2944766}{article}{
   author={Chodounsk{\'y}, David},
   title={Strong-Q-sequences and small $\germ{d}$},
   journal={Topology and its Applications},
   volume={159},
   date={2012},
   number={13},
   pages={2942--2946},
   issn={0166-8641},
   review={\MR{2944766}},
   doi={10.1016/j.topol.2012.05.012},
}

\bib{MR0461444}{article}{
   author={Frankiewicz, Ryszard},
   title={To distinguish topologically the space $m\sp*$},
   language={English, with Russian summary},
   journal={Bulletin de l'Acad\'emie Polonaise des Sciences. %
            S\'erie des Sciences Math\'ematiques, Astronomiques et Physiques}, 
   volume={25},
   date={1977},
   number={9},
   pages={891--893},
   issn={0001-4117},
   review={\MR{0461444 (57 \#1429)}},
}

\bib{MR776630}{article}{
   author={van Mill, Jan},
   title={An introduction to $\beta\omega$},
   book={
      title={Handbook of set-theoretic topology},
      editor={Kunen, Kenneth},
      editor={Vaughan, Jerry E.},
      publisher={North-Holland},
      place={Amsterdam},
   },
   date={1984},
   pages={503--567},
   review={\MR{776630 (86f:54027)}},                                                                                       
}

\bib{Nyikos}{article}{
  author={Nyikos, Peter J.},
  title={{\v{C}ech-Stone} remainders of discrete spaces},
  book={
       title={Open problems in topology. II},
       editor={Pearl, Elliott},
       publisher={Elsevier B.V., Amsterdam},
       date={2007},
  },
  pages={207--216},
}

\bib{MR2367385}{collection}{
   title={Open problems in topology. II},
   editor={Pearl, Elliott},
   publisher={Elsevier B.V., Amsterdam},
   date={2007},
   pages={xii+763},
   isbn={978-0-444-52208-5},
   isbn={0-444-52208-5},
   review={\MR{2367385 (2008j:54001)}},
}

\bib{MR801424}{article}{
   author={Stepr{\=a}ns, Juris},
   title={Strong-$Q$-sequences and variations on Martin's axiom},
   journal={Canadian Journal of Mathematics},
   volume={37},
   date={1985},
   number={4},
   pages={730--746},
   issn={0008-414X},
   review={\MR{801424 (87c:03106)}},
   doi={10.4153/CJM-1985-039-6},
}

\end{biblist}

\end{bibdiv}

\end{document}